\newtheorem{lemma}{Lemma}
\newtheorem{theorem}{Theorem}
\title{\LARGE \bf
Surrogate Optimal Control for Strategic Multi-Agent Systems
}
\author{Pedro Hespanhol and Anil Aswani
\thanks{$^{1}$P. Hespanhol and A. Aswani are with the Department of Industrial Engineering and Operations Research, University of California, Berkeley, CA 94720 USA 
        {\tt\small pedrohespanhol@berkeley.edu, aaswani@berkeley.edu}}%
}
\begin{document}

\maketitle
\thispagestyle{empty}
\pagestyle{empty}

\begin{abstract}
This paper studies how to design a platform to optimally control constrained multi-agent systems with a single coordinator and multiple strategic agents. In our setting, the agents cannot apply control inputs and only the coordinator applies control inputs; however, the coordinator does not know the objective functions of the agents, and so must choose control actions based on information provided by the agents. One major challenge is that if the platform is not correctly designed then the agents may provide false information to the coordinator in order to achieve improved outcomes for themselves at the expense of the overall system efficiency. Here, we design an interaction mechanism between the agents and the coordinator such that the mechanism: ensures agents truthfully report their information, has low communication requirements, and leads to a control action that achieves efficiency by achieving a Nash equilibrium. In particular, we design a mechanism in which each agent does not need to posses full knowledge of the system dynamics nor the objective functions of other agents.  We illustrate our proposed mechanism in a model predictive control (MPC) application involving heating, ventilation, air-conditioning (HVAC) control by a building manager of an apartment building. Our results showcase how such a mechanism can be potentially used in the context of distributed MPC.
\end{abstract}


\section{Introduction}

Many systems have dynamics influenced by agents, including power systems \cite{venkat2008distributed}, communication networks \cite{kelly1998rate}, water systems \cite{negenborn2009distributed}, and heating, ventilation, and air-conditioning (HVAC) automation \cite{aswani2012energy}. These systems are characterized by information flows and the order of computations. For cooperative agents, various distributed model predictive control (MPC) schemes have been designed. A system-level control policy was obtained by aggregating locally-computed inputs \cite{farina2011distributed,alessio2011decentralized}, and central platforms that compute a control based upon information sent by agents have also been designed \cite{ferrari2009model}. 

Distributed control with strategic agents is less well-studied. The competitive nature of agents and asymmetries of information reward tactical behavior, ultimately leading to instability or poor performance \cite{venkat2005stability,rawlings2009,mintz2018control,johari2004efficiency}. We focus our attention on the case where equilibrium behavior can be described as a Nash equilibrium of some non-cooperative game \cite{neck1987conflict} that may be inefficient \cite{cohen1998cooperation}. A common way to overcome such inefficiencies is to force agents to coordinate their goals with the system-wide goal \cite{marden2009overcoming}, \cite{li2010designing}. However, this approach requires strong assumptions that the agents' utility functions are common knowledge and/or agents are honest when transmitting information \cite{stewart2010cooperative}. Another line of work \cite{ratliff2012pricing,coogan2013energy} provides pricing schemes to induce or manage agents' behavior in the equilibrium.
\subsection{Contributions}	

In this paper, we study the case of strategic agents under weaker assumptions than past work like \cite{shamma2008cooperative,stewart2010cooperative}. In particular, the agents exchange information only with a central platform that is responsible for the control decision. Our goal is to design the interaction mechanism to ensure not only efficiency of the resulting control policy but also honest reporting from the agents. Originally, the study of such mechanisms \cite{kelly1997charging} was concerned with the design of incentives to ensure efficient allocation of commodities amongst market participants, whilst ensuring truthfulness. The classical VCG mechanisms \cite{vickrey1961counterspeculation,clarke1971multipart,groves1973incentives} are an example of such. Our first contribution lies in providing a mechanism that enjoy those properties when applied to an optimal control setting.

A major hurdle in implementing such mechanisms is their steep communication needs \cite{yang2007vcg,johari2004efficiency,farhadi2018surrogate}. But, minimal strategy spaces that elicit efficient Nash equilibrium in convex environments have been developed \cite{reichelstein1988game}.  A second contribution of our work is to provide communication protocols that are of low complexity order: We avoid communicating the entire utility function by the agents and instead resort to vector-valued messages inspired by surrogate optimization \cite{yang2007vcg}, \cite{johari2009efficiency}. Hence, our goal in this paper is to provide a platform where (i) agents provide low-dimensional information, (ii) agents are honest, and (iii) an efficient control policy is implemented in the Nash equilibrium.

Lastly, we demonstrate the practical usefulness of our designed platform by conducting a simulation analysis of HVAC automation \cite{ma2011distributed}. The situation we consider involves an apartment building where each apartment has its own preferences on desired room temperature versus the amount of energy consumption. The thermal dynamics of each apartment are coupled, and more efficient control is possible through coordination. Our simulations quantify the performance improvement possible through the use of our central platform in coordinating agents. In fact, this HVAC setup is similar to the setup in \cite{coogan2013energy}. However, a major difference is that in \cite{coogan2013energy} the central platform knows each agents' utility function and can set prices on the control inputs to induce agents' behavior. In contrast, we allow the agents to be strategic with respect to how they communicate information about their utility function to the central platform.

\subsection{Outline}

Sect. \ref{sec2} defines the system model, and Sect. \ref{sec3} defines the mechanism and how agents interact with it. In Sect. \ref{sec4}, we provide a Nash equilibrium characterization of the agents' equilibrium behavior. We conclude with Sect. \ref{sec5}, where we provide a case study in the context of HVAC automation.
\vfill

\section{System Model}

\label{sec2}

Consider a system which obeys linear dynamics
\begin{equation}
x_{k+1} = Ax_{k} + Bu_{k}
\end{equation}
where $x_k \in \mathbb{R}^{n}$ is the state vector, and $u_k \in \mathbb{R}^{m}$ is the input signal. Suppose this system is composed of $I$ interconnected and non-overlapping subsystems that are each associated to an agent. Let $[I]$ denote the set $\{0,...,I\}$. We let $x^{(i)}_{k} \in \mathbb{R}^{n_{i}}$ denote the state vector of subsystem $i$ at period $k$. Then we have $x_{k} = (x^{(1)}_{k},...,x^{(I)}_{k} )$ and $\sum_{i= 1}^{I}n_i=n$. In addition, we can also partition the inputs where $u^{(i)}_{k} \in \mathbb{R}^{m_i}$. Note it follows that $u_k = (u^{(1)}_{k},...,u^{(I)}_{k})$ and $\sum_{i= 1}^{I}m_i=m$.

\subsection{Agent Model}

The diagonal block $A_{ii}$ of $A$ gives the subsystem dynamics for the $i$-th agent. Influence by other agents is described by off-diagonal blocks $A_{ij}$ of $A$ when subsystem $j$ impacts $i$. We assume agent $i$'s input only affects states in their subsystem; hence, the input matrix $B = \mathrm{diag}(B_{1},...,B_{I})$ is block-diagonal. Let $\mathcal{N}_{i}$ be the set of neighboring subsystems of subsystem $i$. Then the dynamics for the $i$-th subsystem is
\begin{equation}\label{agent_subsys}
\textstyle x^{(i)}_{k+1} = A_{ii}x^{(i)}_{k} + B_{i}u^{(i)}_{k} + \sum_{j \in\mathcal{N}_{i}} A_{ij}x^{(j)}_{k}.
\end{equation}
We assume each agent only knows their own local dynamics $A_{ii}$ and $B_i$. Since agents do not know the $\sum_{j \in\mathcal{N}_{i}} A_{ij}x^{(j)}_{k}$ part of their dynamics, a central platform is needed through which each agent can receive this information. 

Each subsystem has state $\mathbb{X}_{i} = \{ G^{(i)}_{x}x^{(i)} \leq g^{(i)}_{x}  \}$ and input constraints $\mathbb{U}_{i} = \{ G^{(i)}_{u}u{(i)} \leq g^{(i)}_{u}  \}$ that are polytopes containing the origin. Here, $\{G^{(i)}_{x},G^{(i)}_{u}\}_{i=1}^{I}, \{g^{(i)}_{x},g^{(i)}_{u}\}_{i=1}^{I}$ are matrices and vectors with appropriate dimensions, respectively. Lastly, each agent $i$ has their own cost function
\begin{equation}
\textstyle\textbf{V}_{i}(x^{(i)},u^{(i)})) = g_i(x^{(i)}_T) + \sum_{k=0}^{T-1}l_{i}(x^{(i)}_{k},u^{(i)}_{k})  
\end{equation}
where $l_{i}(\cdot,\cdot)$ and $g_i(\cdot)$ are stage and terminal costs, and $T$ is control horizon. We assume each agent's stage and terminal costs are strictly convex, differentiable, and take their minimum at the origin. The cost function is the agents' private information, and their goal is to minimize it.


\subsection{Principal Model}

The central platform is operated by a coordinator that we call the \emph{principal}. We assume the principal has complete knowledge about the dynamics of the system (i.e., matrices $A$ and $B$) and constraints (i.e., the sets $\mathbb{X}_{i},\mathbb{U}_{i} , \forall i \in [I]$), and importantly the principal is who gets to apply a control input to the entire system (restated, the agents do not directly provide control inputs). In this framework, if the principal knew the objective function of each agent, then they could compute a control sequence by solving the following convex optimal control problem (OCP-T):
\begin{align}
& \min_{x,u}\ \textstyle\sum_{i =1}^{M}(g_i(x^{(i)}_T) + \sum_{k =0}^{T-1}l_{i}(x^{(i)}_{k},u^{(i)}_{k})) \label{TRUE-OCP} \nonumber\\
&\ \begin{aligned}
\mathrm{s.t.}\ & x_{k+1} = Ax_{k} + Bu_k,&& \forall k \in [T-1]  \\
& x^{(i)}_{k} \in \mathbb{X}_{i},  u^{(i)}_{k} \in \mathbb{U}_{i}, &&\forall i \in [I], k \in [T]  \\
& x^{(i)}_{0} = \bar{x}^{(i)}_{0}, &&\forall i \in [I]
\end{aligned}
\end{align}

Throughout the paper we let $(x^{*},u^{*})$ denote the optimal solution of (OCP-T), which we call the \textit{efficient trajectory}. However, solving this problem is not possible for the principal, since it does not know the objective functions of each agent. It then needs to elicit information from each agent. The need of information gives birth to two major issues, which are the central focus of this work: (1) The agents may not be able/not desire to transmit their entire cost functions to the principal, as each cost function is infinite-dimensional and their private information; (2) The agents are strategic and may be not tell the truth. Therefore in order for the principal to solve (OCP-T) it also needs to design a mechanism that provides incentives to each agent to tell the truth. Hence, the principal is faced with both an optimal control problem and a mechanism design problem.

\section{Mechanism Specification}
\label{sec3}

As described in the previous section, the principal's goal is to solve (OCP-T). Towards that goal, the principal resorts to approximate the objective function based on a finite number of parameters that the agents can report, and then the principal will minimize this approximated function. 

\subsection{Definition of a Mechanism}

Let a \emph{mechanism} $\mathcal{M}$ be a tuple $(M_1,...,M_{I},z,p)$, where $M_i$ is the set of allowable messages agent $i$ can send to the principal, and $z(\cdot)$ is the outcome function that determines the outcome $z(m)$ for any message profile $m = (m_1,...,m_I) \in M_1 \times ... \times M_I$. Here, the outcome function maps a message profile $m$ to a state/input trajectory $(x,u)$:
\begin{equation}
z(m): ( M_1 \times ... \times M_I) \rightarrow \mathbb{R}^{n\times (T+1)} \times \mathbb{R}^{m\times T}
\end{equation}
where $z_{i}(m)$ refers to the state/input trajectory associated with agent's $i$ subsystem. Next, we define $p(m)$ to be a non-negative vector of ``fees'' for each agent. 

The mechanism $\mathcal{M}$ together with the cost functions of each agents $(\textbf{V}_{i})_{i=1}^{I}$ induce a game $\textbf{N} = (\mathcal{M},(\textbf{V}_{i})_{i=1}^{I})$ among the agents. We define the Nash equilibrium (NE) of this game as a message profile $m^{*}$ such that
\begin{multline}
\textbf{V}_{i}(z_{i}(m^{*}_i,m^{*}_{-i})) + p_i(m^{*}_i,m^{*}_{-i}) \leq \\ \textbf{V}_{i}(z_{i}(m_i,m^{*}_{-i})) + p_i(m_i,m^{*}_{-i}),
\end{multline}
for all $m_i \in M_i$ and $i\in[I]$, where the compact notation $m^{*}_{-i}$ denotes the vector of messages from all agents except $i$. The fee $p_i$ increases costs for agent $i$, which is undesirable since agents are minimizing. The goal of the principal is to design the mechanism such that the efficient trajectory $(x^{*},u^{*})$ can be implemented as the Nash equilibrium of the game \textbf{N}. Implementation means that the trajectory corresponding to the Nash equilibrium of the game induced by the mechanism is equal to the efficient trajectory.

\subsection{Low-Communication Mechanism}

We specify our low-communication mechanism as follows: Each agent $i$ reports messages $m_i \in M_i$ of the form
\begin{equation} \label{message}
m_i = (v^{(i)},w^{(i)},\tilde{\lambda}^{i}, \tilde{J}^{(i)},\tilde{x}^{(i)})
\end{equation}
where $v^{(i)} \in \mathbb{R}^{n_i \times (T+1)}$ are weights for every state of subsystem $i$ for each stage; $w^{(i)} \in \mathbb{R}^{m_i \times T}$ are weights for every control input of subsystem $i$ for each stage; $\tilde{\lambda}^{i} \in \mathbb{R}^{n_i \times T}$ are weights representing the ``sensitivity'' of agent $i$ dynamics in cost function for each stage; $\tilde{J}^{(i)} = (\{\underline{x}^{(i)}_{k},\bar{x}^{(i)}_{k}\}_{k=0}^{T},\{\underline{u}^{(i)}_{k},\bar{u}^{(i)}_{k}\}_{k=0}^{T})$ is vector of bounds for states/inputs; and $\tilde{x}^{(i)}= (\tilde{x}^{(i)}_{0},...,\tilde{x}^{(i)}_{T})$ is a reference trajectory for the states of subsystem $i$. Restated, each agent provides some open-loop trajectory coupled with state and input bounds, as well as scalars measuring the ``impact'' of states, inputs, and dynamics in its cost function.

In addition, the principal announces a single real-valued function $f:\mathbb{R} \times \mathbb{R} \rightarrow \mathbb{R}$ to all agents to be used as a surrogate function for their cost functions. Namely, for each agent $i$ the principal forms the surrogate function
\begin{multline}
f_{i}(v^{(i)}_{k},w^{(i)}_{k},x^{(i)}_{k},u^{(i)}_{k}) = \\\textstyle\sum_{j=1}^{m_i}(f(x^{(i)}_{j,k};v^{(i)}_{j,k}) + f(u^{(i)}_{j,k};w^{(i)}_{j,k})) \text{ for } k\in[T-1]
\end{multline}
as an approximation of the agent's stage cost $l_i(\cdot,\cdot)$. The notation $f(\cdot;\cdot)$ indicates the second argument is a parameter of the function and not a variable. We further only consider functions $ f(\cdot; v)$ that are strictly convex for all possible parameters $v \in \mathbb{R}$. Lastly, for simplicity we let the principal announce the same function for both states and inputs. But one could consider different functions for states and inputs -- the key property being that it is the same function for all agents. Then the principal forms the surrogate function
\begin{equation}
F_i(v({i)}_T,x^{(i)}_T) = \textstyle\sum_{j=1}^{m_i} f(x^{(i)}_{j,T};v^{(i)}_{j,T})
\end{equation}
as an approximation of agent's terminal cost $g_i(\cdot)$. Based on a message profile $m$, the principal formulates the following surrogate optimal control problem (OCP-S):
\begin{align}
&\min_{x,u}\ \sum_{i =1}^{I}(F_i(v({i)}_T,x^{(i)}_T) + \sum_{k =0}^{T-1}f_{i}(v^{(i)}_{k},w^{(i)}_{k},x^{(i)}_{k},u^{(i)}_{k}))\nonumber
\\
&\ \begin{aligned}\mathrm{s.t.}\ &x_{k+1}=Ax_k+Bu_k,&&\forall k \in [T-1] \\
& x^{(i)}_{k} \in \mathbb{X}_{i},  u^{(i)}_{k} \in \mathbb{U}_{i}, &&\forall i \in [I], k \in [T]\\
&  (x^{(i)}_{k},u^{(i)}_{k}) \in \tilde{J}^{(i)}_k, &&\forall k \in [T] \\
& x^{(i)}_{0} = \bar{x}^{(i)}_{0}, &&\forall i \in [I]
\end{aligned} \label{MECH-OCP}
\end{align}
where we explicitly consider the desired operational bounds reported by each agent $\tilde{J}^{(i)}_k$ for every stage $k$. 

Since $f$ is strictly convex, this optimization problem has an unique solution that we call $(x(y^{*}),y^{*})$. Note that this notation means the $y^*$ are the optimal inputs for OCP-S. Then, given a message profile $m$, we have that $z(m) =(x(y^{*}),y^{*})$.  That is, the outcome function of the mechanism $z(m)$ outputs exactly the state/input trajectory of the optimal solution of OCP-S. We also define $\lambda^{*(i)}$ to be the optimal lagrange multipliers associated with Eq.~\ref{agent_subsys} for every agent $i$.

Now, suppose the game is repeatedly played with the same initial condition $\bar{x}_0$. At first, this mechanism is run for one round, meaning the principal collected some message $m$, and solved OCP-S once. Then, before the next round the principal sends the following reference trajectory $c^{(i)}$ to agent $i$:
\begin{equation}
\textstyle c^{(i)}_{k} = \sum_{j \in\mathcal{N}_{i}} A_{ij}\tilde{x}^{(j)}_{k}
\end{equation}
for $k \in[T-1]$, where $\tilde{x}^{(j)}$ is part of the message $m_{j}$ as per (\ref{message}). Observe that the reference trajectory sent to agent $i$ does not depend upon solving OCP-S. Moreover the principal will assign the following fees to each agent:
\begin{align} \label{transfer}
& p_i = \sum_{k=0}^{T-1}\Lambda_{-i,k}^{\top}(x^{(i)}_{k}(y^{*}) - \hat{x}^{(i)}_{k}(c^{(i)})) \text{ } + \nonumber\\
& ||\tilde{x}^{(i)} - x^{(i)}(y^{*})||^{2}_{2} + ||\tilde{\lambda}^{(i)} - \lambda^{*(i)}||^{2}_{2}
\end{align}
where $\hat{x}^{(i)}(c^{(i)})$ is a state reference trajectory computed by the principal for agent $i$ given that the other agents behave according to $c^{(i)}$. For example, the principal can solve another round of OCP-S but now excluding agent $i$'s contribution to the objective function in order to obtain $\hat{x}^{(i)}(c^{(i)})$ (in a way akin to VCG mechanisms~\cite{groves1973incentives}). The key observation here is that the reference trajectory $\hat{x}^{(i)}(c^{(i)})$ does not depend on the message sent by agent $i$. The first term of the fee penalizes deviations of the computed optimal state trajectory $x^{(i)}(y^{*})$ from $\hat{x}^{(i)}(c^{(i)})$.  The second term penalizes mismatches between the reported $\tilde{x}^{(i)}$ and the optimal state trajectory $x^{(i)}(y^{*})$. The third term penalizes deviations from the reported sensitivity vector $\tilde{\lambda}^{(i)}$ and the optimal lagrange multipliers  $\lambda^{*(i)}$ of OCP-S associated with the dynamics of agent $i$. Lastly the vectors $\Lambda_{-i,k}$ are computed by the principal as follows:
\begin{equation}
\Lambda_{-i,k}^{\top} = \sum_{j: i \in\mathcal{N}_{j}} \tilde{\lambda}^{(j)\top}_{k}A_{ji}  \text{, } \forall k\in[T-1]
\end{equation}

where we, once again, note that this vector does not depend on the message sent by agent $i$.

\section{equilibrium Characterization}

\label{sec4}

With the mechanism defined, we can now characterize the equilibrium behavior of agents interacting via this mechanism. The goal of this section is to characterize the Nash equilibrium (NE) of the reulting game, which is a message profile $m^{*}$. We start by first analyzing the properties of such equilibrium, and then we show it actually exists. Our analysis begins by showing that in a NE $m^{*}$, each agent $i$ reports a specific type of state reference trajectories to the principal.

\begin{lemma}
Let $m^{*}=(v^{*},w^{*},\tilde{\lambda}^{*},\tilde{J}^{*},\tilde{x}^{*})$ be a NE of the game induced by the mechanism. Then every agent $i \in I$ reports $\tilde{x}^{(i)*} = x^{(i)}(y^{*})$ and $\tilde{\lambda}^{(i)*} = \lambda^{(i)*}$. In addition, the principal sends the following references to the agents:
\begin{equation}
\textstyle c^{*(i)}_{k} = \sum_{j \in\mathcal{N}_{i}} A_{ij}x^{(j)}_{k}(y^{*}) \text{, } \forall k\in[T-1]
\end{equation}
\end{lemma}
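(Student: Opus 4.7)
The plan is to exploit the structural decoupling built into the fee formula (\ref{transfer}): the two quadratic penalty terms are the only place in agent $i$'s total cost where the report $\tilde{x}^{(i)}$ or $\tilde{\lambda}^{(i)}$ appears, and the quantities they are compared against are independent of those same reports. Thus, best-responding on the $(\tilde{x}^{(i)},\tilde{\lambda}^{(i)})$ coordinates reduces to minimizing two independent squared-norm terms, whose minimizers give exactly the claim.

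First I would make the dependence explicit. Inspect (OCP-S) and observe that its objective and constraints use only the components $(v^{(j)},w^{(j)},\tilde{J}^{(j)})$ of each agent's message; in particular, the primal solution $(x(y^{*}),y^{*})$ and the multipliers $\lambda^{*(j)}$ associated with the dynamics constraints (\ref{agent_subsys}) are functions solely of $(v,w,\tilde{J})$ and not of the reported trajectories $\tilde{x}$ or sensitivities $\tilde{\lambda}$. Next, inspect the principal's auxiliary quantities: $\Lambda_{-i,k}$ is built only from $\tilde{\lambda}^{(j)}$ for $j$ with $i\in\mathcal{N}_{j}$, so it does not depend on agent $i$'s own $\tilde{\lambda}^{(i)}$; and $\hat{x}^{(i)}(c^{(i)})$ is computed using the reference $c^{(i)}$, which is a linear combination of $\tilde{x}^{(j)}$ for $j\in\mathcal{N}_{i}$, so it does not depend on agent $i$'s own $\tilde{x}^{(i)}$.

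Now fix any profile $m^{*}_{-i}$ of the other agents' messages and fix agent $i$'s own $(v^{(i)},w^{(i)},\tilde{J}^{(i)})$. By the preceding observations, the agent's total cost
\begin{equation*}
\mathbf{V}_{i}(z_{i}(m))+p_{i}(m)
\end{equation*}
depends on $\tilde{x}^{(i)}$ only through the term $\|\tilde{x}^{(i)}-x^{(i)}(y^{*})\|_{2}^{2}$ and depends on $\tilde{\lambda}^{(i)}$ only through $\|\tilde{\lambda}^{(i)}-\lambda^{*(i)}\|_{2}^{2}$; both $x^{(i)}(y^{*})$ and $\lambda^{*(i)}$ are constants in these two reports. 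These nonnegative quadratics attain their unique minimum $0$ at $\tilde{x}^{(i)}=x^{(i)}(y^{*})$ and $\tilde{\lambda}^{(i)}=\lambda^{*(i)}$ respectively, so any best response—hence any NE component—must satisfy $\tilde{x}^{(i)*}=x^{(i)}(y^{*})$ and $\tilde{\lambda}^{(i)*}=\lambda^{(i)*}$ for every $i\in[I]$. Substituting these equilibrium reports into the principal's reference rule $c^{(i)}_{k}=\sum_{j\in\mathcal{N}_{i}}A_{ij}\tilde{x}^{(j)}_{k}$ yields $c^{*(i)}_{k}=\sum_{j\in\mathcal{N}_{i}}A_{ij}x^{(j)}_{k}(y^{*})$, which is the last assertion of the lemma.

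The argument is almost entirely structural rather than computational, so I do not expect a genuine obstacle; the one subtle point that needs care is verifying—by reading off (OCP-S) and the definitions of $\Lambda_{-i,k}$ and $\hat{x}^{(i)}(c^{(i)})$—that the two quadratic penalty terms are truly the only channel through which agent $i$'s own $\tilde{x}^{(i)}$ and $\tilde{\lambda}^{(i)}$ enter the objective. Once this independence is pinned down, the rest is immediate minimization of two decoupled squared norms.
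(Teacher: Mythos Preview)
Your proposal is correct and follows essentially the same approach as the paper: both arguments observe that the outcome $z(m)$ (and hence $\mathbf{V}_i(z_i(m))$, $x^{(i)}(y^*)$, $\lambda^{*(i)}$) depends only on $(v,w,\tilde{J})$, so agent $i$'s cost depends on $(\tilde{x}^{(i)},\tilde{\lambda}^{(i)})$ solely through the two quadratic penalty terms, whose unique minimizers yield the claim and then the reference formula. If anything, you are slightly more explicit than the paper in checking that the first term of the fee (via $\Lambda_{-i,k}$ and $\hat{x}^{(i)}(c^{(i)})$) is also independent of agent $i$'s own $(\tilde{x}^{(i)},\tilde{\lambda}^{(i)})$.
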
 

\begin{proof}
Suppose all agents adhere to the message profile $m^{*}$, except agent $i$ which reports some message $m_{i} = (v^{(i)},w^{*(i)},\tilde{\lambda}^{(i)},J^{*(i)},\tilde{x}^{(i)})$. Since $m^{*}$ is a NE, this deviation should give a higher cost for agent $i$, that is:
\begin{multline}
\textbf{V}_{i}(z_{i}(m_i,m^{*}_{-i})) + p_i(m_i,m^{*}_{-i}) \geq \\ \textbf{V}_{i}(z_{i}(m^{*}_i,m^{*}_{-i})) + p_i(m^{*}_i,m^{*}_{-i}).
\end{multline}
Now, observe that the outcome function $z(m)$ only depends on the $(v^{*},w^{*},\tilde{J}^{*})$ components of the message $m^{*}$. Then substituting into (\ref{transfer}) gives that
\begin{align}
& ||\tilde{\lambda}^{(i)} - \lambda^{*(i)}||^{2}_{2} + ||\tilde{x}^{(i)} - x^{(i)}(y^{*}))||^{2}_{2} \nonumber\\
& \geq ||\tilde{\lambda}^{*(i)} - \lambda^{*(i)}||^{2}_{2} + ||\tilde{x}^{*(i)} - x^{(i)}(y^{*}))||^{2}_{2}
\end{align}
for all possible sensitivities and state trajectory reports $(\tilde{\lambda}^{(i)},\tilde{x}^{(i)})$. Hence $(\tilde{\lambda}^{*(i)},\tilde{x}^{*(i)})$ is the solution of the following minimization problem:
\begin{equation}
\min_{(\tilde{\lambda}^{*(i)},\tilde{x}^{*(i)})}\{||\tilde{\lambda}^{(i)} - \lambda^{*(i)}||^{2}_{2} + ||\tilde{x}^{(i)} - x^{(i)}(y^{*})||^{2}_{2}\}
\end{equation}
which achieves the minimum when $(\tilde{\lambda}^{*(i)},\tilde{x}^{*(i)}) = (\lambda^{*(i)}x^{(i)}(y^{*}))$. Then by definition of $c^{(i)}$ it directly follows that
\begin{equation}
\textstyle c^{*(i)}_{k} = \sum_{j \in\mathcal{N}_{i}} A_{ij}x^{(j)}_{k}(y^{*})  \text{, } \forall k\in[T-1]
\end{equation}
\end{proof}

Next, observe that each agent can only ``measure'' the impact of other subsystems in its dynamics via the reference signal $c^{(i)}$ that is sent by the principal. We say an state/input sequence $(\check{x}^{(i)},\check{u}^{(i)})$ is feasible for agent $i$ if it is feasible for the agent's subsystem given the reference $c^{(i)}$. We proceed to show that given the reference $c^{(i)}$, any feasible state/input sequence $(\check{x}^{(i)},\check{u}^{(i)})$ can be achieved by agent $i$. That is, agent $i$ can send a message that makes the principal compute the input $y^{*(i)} = \check{u}^{(i)}$ and $x^{(i)}(y^{*}) = \check{x}^{(i)}$ as it solves the problem OCP-S, given that OCP-S is feasible. 
\begin{lemma}
For any agent $i$, given a feasible state/input sequence $(\check{x}^{(i)},\check{u}^{(i)})$ there exists a message $\bar{m}_{i}$ such that $z_{i}(\bar{m}_{i}, m_{-i}) = (\check{x}^{(i)},\check{u}^{(i)})$ for all possible messages of the other agents $m_{-i}$, given that the resulting optimization problem (OCP-S) is feasible for $(\bar{m}_{i},m_{-i})$.
\end{lemma}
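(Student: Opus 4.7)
The plan is to construct the message $\bar{m}_i$ so that the operational-bound portion of the message, which is agent $i$'s alone to control, forces agent $i$'s slice of the OCP-S solution to coincide with $(\check{x}^{(i)}, \check{u}^{(i)})$. Recall that $\tilde{J}^{(i)}$ enters OCP-S as box constraints $(x^{(i)}_k, u^{(i)}_k) \in \tilde{J}^{(i)}_k$; the natural idea is to shrink these boxes to singletons at the target values.

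Concretely, I would set $\underline{x}^{(i)}_k = \bar{x}^{(i)}_k = \check{x}^{(i)}_k$ and $\underline{u}^{(i)}_k = \bar{u}^{(i)}_k = \check{u}^{(i)}_k$ for every $k \in [T]$, so that $\tilde{J}^{(i)}_k$ reduces to the single point $(\check{x}^{(i)}_k, \check{u}^{(i)}_k)$. The other components $v^{(i)}, w^{(i)}, \tilde{\lambda}^{(i)}, \tilde{x}^{(i)}$ of $\bar{m}_i$ may be chosen arbitrarily (for instance, set to zero), since they do not enter the constraints of OCP-S and only affect the objective and the fee. Under this $\bar{m}_i$, any feasible point of OCP-S must satisfy $x^{(i)}_k = \check{x}^{(i)}_k$ and $u^{(i)}_k = \check{u}^{(i)}_k$ for every $k$. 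By hypothesis OCP-S is feasible for $(\bar{m}_i, m_{-i})$, and because $f(\cdot; v)$ is strictly convex for every parameter $v$, OCP-S admits a unique optimizer $(x(y^{*}), y^{*})$, whose projection onto agent $i$'s coordinates is therefore exactly $(\check{x}^{(i)}, \check{u}^{(i)})$. By the definition of the outcome function, $z_i(\bar{m}_i, m_{-i}) = (\check{x}^{(i)}, \check{u}^{(i)})$.

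The only subtlety is ensuring that collapsing $\tilde{J}^{(i)}_k$ to a singleton is an admissible message, which is a straightforward consequence of the box-constraint form of $\tilde{J}^{(i)}$. All questions of joint feasibility under the coupled dynamics $x_{k+1} = Ax_k + Bu_k$ and the other agents' reports are absorbed into the explicit feasibility hypothesis, so nothing further is required on that front. A more elaborate construction that instead induces the desired trajectory through the surrogate weights $v^{(i)}, w^{(i)}$ via the KKT conditions of OCP-S is imaginable, but it would be considerably more delicate because the global dynamics couple agent $i$'s trajectory to every other agent's; the bound-based construction sidesteps this issue entirely.
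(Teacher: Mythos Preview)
Your proposal is correct and follows essentially the same route as the paper: construct $\bar{m}_i$ by collapsing the reported bounds $\tilde{J}^{(i)}$ to the singleton $(\check{x}^{(i)}_k,\check{u}^{(i)}_k)$ at each stage, so that any feasible (hence the unique optimal) solution of OCP-S must have agent $i$'s component equal to $(\check{x}^{(i)},\check{u}^{(i)})$. Your write-up is in fact slightly more explicit than the paper's, noting that the remaining message components may be chosen arbitrarily and invoking strict convexity for uniqueness.
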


\begin{proof}
Fix some agent $i$ and a feasible state/input sequence $(\check{x}^{(i)},\check{u}^{(i)})$. We prove this lemma by constructing the message $\bar{m}_i$. Specifically, suppose agent $i$ chooses $\underline{x}_k^{(i)} = \bar{x}_k^{(i)} = \check{x}_k^{(i)}$ and  $\underline{u}_k^{(i)} = \bar{u}_k^{(i)} = \check{u}_k^{(i)}$. This choice constrains OCP-S to require that $y^*(i) = \check{u}^{(i)}$ and $x^{(i)}(y^{*}) = \check{x}^{(i)}$. Then for any message $m_{-i}$, OCP-S is either infeasible or returns the desired solution for agent $i$, regardless of the message of other agents.
\end{proof}

What this lemma implies is that given the Nash equilibrium message profile $m^{*}$, agent $i$ can unilaterally deviate in such a way that the principal will compute $(\check{x}^{(i)},\check{u}^{(i)})$ as part of the optimal solution, as long as OCP-S is feasible. We proceed in writing the agent's optimal control problem (OCP-A) in equilibrium:
\begin{align} 
&\min_{x^{(i)},u^{(i)}} \textstyle g_i(x^{(i)}_T) + \sum_{k =0}^{T-1}l_{i}(x^{(i)}_{k},u^{(i)}_{k}) + p^{*}_i(x^{(i)})\nonumber\\
&\ \begin{aligned}
\mathrm{s.t.}\ & x^{(i)}_{k+1} = A_{ii}x^{(i)}_{k} + B_{i}u^{(i)}_{k} + c^{*(i)}_{k}, &&\forall k \in [T-1] \\
& x^{(i)}_{k} \in \mathbb{X}_{i},  u^{(i)}_{k} \in \mathbb{U}_{i}, &&\forall k \in [T]\\
& x^{(i)}_{0} = \bar{x}^{(i)}_{0}
\end{aligned}\label{AGENT-OCP}
\end{align}
where the equilibrium fee, according to Lemmas 1 and 2 is given by:
\begin{equation}
p^{*}_i(x^{(i)}) = \sum_{k=0}^{T-1}\Lambda_{-i,k}^{*\top}(x^{(i)}_{k} - \hat{x}^{(i)}_{k}(c^{(i)}))
\end{equation}
where $\Lambda_{-i,k}^{*\top} = \sum_{j: i \in\mathcal{N}_{j}} \lambda^{*(j)\top}_{k}A_{ji}  \text{, } \forall k\in[T-1]$.

Thus in order for a message profile $m^{*}$ to be a NE, we must have that the optimal solution $(x(y^{*}),y^{*})$ for OCP-S must also be an optimal solution for each agents' OCP-A. Since both OCP-S and OCP-A are convex problems, it is enough to require that $(x^{*(i)}(y^*),y^{*(i)})$ satisfy the KKT conditions for OCP-A for every agent $i$. Next we present our main theorem, which shows that the efficient trajectory $(x^*,u^*)$ can be implemented as a Nash equilibrium of the game induced by the mechanism:

\begin{theorem}
\textbf{(Implementability): } The unique efficient trajectory can be supported as a Nash equilibrium $m^{*}$ of the game induced by the mechanism, that is $(x(y^{*}),y^{*})= (x^{*},u^{*})$. In addition the equilibrium messages satisfy
\begin{equation} \label{N_mesg}
\begin{aligned}
&f'(x^{*(i)}_{j,k};v^{*(i)}_{j,k} )=  \frac{\partial l_{i}(x^{*(i)}_{k},u^{*(i)}_{k}) }{\partial x^{(i)}_{j,k}}, &\forall i,j,k\\
&f'(x^{*(i)}_{j,T};v^{*(i)}_{j,T})=  \frac{\partial g_{i}(x^{*(i)}_{T}) }{\partial x^{(i)}_{j,T}}, &\forall i,j\\
&f'(u^{*(i)}_{h,k};w^{*(i)}_{h,k}) =\frac{\partial l_{i}(x^{*(i)}_{k},u^{*(i)}_{k}) }{\partial u^{(i)}_{h,k}}, &\forall i,h,k\\
&\tilde{x}^{i} = x^{*(i)}\text{, } \tilde{\lambda}^{i} = \lambda^{*(i)} &\forall i\\
&\tilde{J}^{(i)} = (\{-\infty,+\infty\}_{k=0}^T, \{-\infty,+\infty\}_{k=0}^T), &\forall i
\end{aligned}
\end{equation}
where $f'(\cdot)$ denotes the derivative of $f(\cdot)$.
\end{theorem}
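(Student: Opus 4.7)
The plan is to exhibit the message profile $m^*$ in (\ref{N_mesg}) as a Nash equilibrium whose induced outcome is the efficient trajectory $(x^*,u^*)$. By Lemmas~1 and~2, $m^*$ is a NE if and only if (i) OCP-S with input $m^*$ outputs $(x^*,u^*)$, and (ii) $(x^{*(i)},u^{*(i)})$ solves each agent's OCP-A with the equilibrium fee $p_i^*$. Because OCP-T, OCP-S, and OCP-A are all strictly convex, it suffices to verify that $(x^*,u^*)$ satisfies the KKT conditions of OCP-S and of each OCP-A at the candidate multipliers; uniqueness of the efficient trajectory then follows from strict convexity of OCP-T's objective.

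For step (i), I would set $\tilde{J}^{(i)}$ to $\pm\infty$ so the reported bound constraints in OCP-S are vacuous, and select $v^{*(i)}, w^{*(i)}$ from the gradient-matching equations of (\ref{N_mesg}); strict convexity of $f(\cdot;v)$ in its first argument makes its derivative strictly monotone, so these surrogate weights are well defined. By construction, $\nabla f_i$ and $\nabla F_i$ evaluated at $(x^*,u^*)$ equal $\nabla l_i$ and $\nabla g_i$, respectively. Since OCP-S and OCP-T then share the same dynamics, polytopic state/input constraints, and cost gradients at $(x^*,u^*)$, the primal-dual pair $(x^*,u^*,\lambda^*)$ taken from OCP-T satisfies OCP-S's KKT conditions, and strict convexity of the surrogate forces OCP-S to return $(x^*,u^*)$ uniquely. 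The choices $\tilde{x}^{(i)*} = x^{*(i)}$ and $\tilde{\lambda}^{(i)*} = \lambda^{*(i)}$ are then consistent with Lemma~1 and zero out the quadratic portions of the fee in equilibrium.

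For step (ii), the stationarity condition of OCP-T with respect to $x^{(i)}_k$ contains, besides agent $i$'s own stage-cost gradient and own-dynamics dual, a cross-coupling term $\sum_{j: i \in \mathcal{N}_j} A_{ji}^\top \lambda^{*(j)}$ arising because $x^{(i)}_k$ appears in the right-hand side of each neighbor $j$'s dynamics constraint. This is exactly the linear fee coefficient $\Lambda_{-i,k}^{*\top}$ in OCP-A (the $\hat{x}^{(i)}$ piece is constant in the agent's decision variables, and the quadratic terms in $\tilde{x}^{(i)}$ and $\tilde{\lambda}^{(i)}$ are independent of $x^{(i)}, u^{(i)}$). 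Consequently, the $(x^{(i)},u^{(i)})$-block of OCP-T's KKT system is literally OCP-A's KKT system, which is satisfied at $(x^{*(i)},u^{*(i)})$ by optimality in OCP-T, establishing that $m^*$ is a NE. The main obstacle is precisely this price-matching identity: one must carefully bookkeep the OCP-T Lagrangian to show the cross-subsystem dual contribution equals $\Lambda_{-i,k}^*$, and invoke Lemma~1 to replace the reported $\tilde{\lambda}^{(j)}$ with $\lambda^{*(j)}$, which step (i) identifies with the OCP-T multipliers — everything else reduces to routine KKT verification.
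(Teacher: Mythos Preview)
Your proposal is correct and follows essentially the same route as the paper: match the KKT stationarity conditions of OCP-T and OCP-S via the gradient-matching choices of $v^{*(i)},w^{*(i)}$ (with $\tilde J^{(i)}=\pm\infty$ so the extra bounds are vacuous), identify the OCP-S dual with the OCP-T dual, and then observe that the cross-coupling term $\sum_{j:\,i\in\mathcal N_j}A_{ji}^{\top}\lambda^{*(j)}_k$ in OCP-T's $x^{(i)}_k$-stationarity is exactly the linear fee coefficient $\Lambda^{*}_{-i,k}$ appearing in OCP-A, so $(x^{*(i)},u^{*(i)})$ satisfies each agent's KKT system. The paper's proof does precisely this, writing the three stationarity equations of OCP-T explicitly and then checking that they become the OCP-S and OCP-A conditions under the message choice (\ref{N_mesg}); your added remarks on strict monotonicity of $f'$ for well-definedness of the weights and on the $\hat x^{(i)}$, $\tilde x^{(i)}$, $\tilde\lambda^{(i)}$ pieces of the fee being constant in the agent's decision are useful clarifications the paper leaves implicit.
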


\begin{proof}
First, note OCP-T is an ``aggregation'' of each agent's problem: Instead of optimizing each agent separately with references $c^{(i)}$ for the neighbors, we optimize all agents at once. The KKT stationarity conditions  for multipliers $(\nu,\gamma_x,\gamma_u)$ of OCP-T, associated with the dynamics, state and input constraints respectively, are
\begin{equation}  \label{tem_eq1}
\begin{aligned}
&\textstyle\frac{\partial l_{i}(x^{*(i)}_{k},u^{*(i)}_{k}) }{\partial x^{(i)}_{j,k}} + \nu^{(i)\top}_{k}{A_{ii}}_{,j} - \nu^{(i)}_{k-1,j} + \\
&\qquad\qquad\qquad\textstyle\gamma^{(i)\top}_{x}G^{(i)}_{x,j,k}+\sum_{\ell\in\mathcal{N}_i}{\nu^{(\ell)\top}_{k}A_{\ell i,j}} = 0\\
& \textstyle\frac{\partial g_{i}(x^{*(i)}_{T}) }{\partial x^{(i)}_{j,k}} - \nu^{(i)}_{T-1,j} + \gamma^{(i)\top}_{x}G^{(i)}_{x,j,T} = 0\\
&\textstyle\frac{\partial l_{i}(x^{*(i)}_{k},u^{*(i)}_{k}) }{\partial u^{(i)}_{h,k}} + \nu^{(i)\top}_{k}{B_{ii}}_{,h} + \gamma^{(i)\top}_{u}G^{(i)}_{u,h,k} = 0
\end{aligned}
\end{equation}
for all $j\in\{1,\ldots,n_i\}$, $h\in\{1,\ldots,m_i\}$ and $k\in \{1,...,T-1\}$. On the above we use the notation $A_{li,j}$ to denote the column $j$ of matrix $A_{li}$. In addition we let $B_{ii,h}$ denote the column h of $B_{ii}$. Similarly, $G_{\cdot,j,k}$ represents the column $j$ of the stage k constraints matrix $G_{\cdot,k}$. Now, if the messages follow (\ref{N_mesg}), then it is easy to see that $(x^{*},u^{*})$ satisfy the KKT conditions of OCP-S:
\begin{equation} \label{tem_eq1}
\begin{aligned}
& f'(x^{*(i)}_{j,k};v^{*(i)}_{j,k}) + \lambda^{(i)\top}_{k}{A_{ii}}_{,j} - \lambda^{(i)\top}_{k-1,j} + \\
&\qquad\qquad\qquad\qquad \beta^{(i)\top}_{x}G^{(i)}_{x,j,k}+\sum_{\ell\in\mathcal{N}_i}{\lambda^{(\ell)\top}_{k}A_{\ell i,j}} = 0\\
&\textstyle f'(x^{*(i)}_{j,T};v^{*(i)}_{j,T}) - \lambda^{(i)}_{T-1,j} + \beta^{(i)\top}_{x}G^{(i)}_{x,j,T}  = 0\\
&f'(u^{*(i)}_{h,k};w^{*(i)}_{h,k}) + \lambda^{(i)\top}_{k}{B_{ii}}_{,h} + \beta^{(i)\top}_{u}G^{(i)}_{u,h,k} = 0
\end{aligned}
\end{equation}
for all $j\in\{1,\ldots,n_i\}$, $h\in\{1,\ldots,m_i\}$, $k\in \{1,...,T-1\}$ and for $\lambda = \nu$, $\gamma_x = \beta_x$ and $\gamma_u = \beta_u$. But in the equilibrium, Lemma 1 says that the reference trajectory $c^{*(i)}$ sent to each agent is exactly the one that would be obtained if each agent applied the input sequent $y^{*(i)}$. Hence $(x^{*},u^{*})$ solves, not only OCP-S, but also each agent's problem when $c^{*(i)}$ is sent to the agents (OCP-A). This can be seen directly by using the multipliers $\lambda^{(i)}$, $\gamma_x^{(i)}$ and $\gamma_u^{(i)}$ for every agent's subproblem and verifying that $(x^{*(i)},u^{*(i)})$ solves the KKT conditions of OCP-A. As a result, no agent has incentive to deviate from $m^{*}_{i}$. Hence $m^{*}$ will be a Nash equilibrium of the game induced by the mechanism.
\end{proof}

We finish this section with some remarks on Theorem 1:

At equilibrium, each agent reports the largest possible bounds $\tilde{J}^{*(i)}$ so that OCP-S is always feasible at equilibrium. One may argue why do we include such reports in the message vector? Their presence is key to establishing Lemma 2, as they provide a ``credible threat'' to the mechanism (and thus to other agents). This forces that the solution $(x(y^*),y^*)$ of OCP-S must solve each agent's subproblem at equilibrium. A similar argument with a numerical example is given in \cite{farhadi2018surrogate} in the context of routing. Also, each agent reports weights such that the derivative of the surrogate function $f$ matches exactly their marginal cost with respect to states and inputs. Secondly, observe that the fees payed in equilibrium are not zero, as they depend on the reference trajectory sent by the mechanism. The intuition behind this is that the fee charged to agent $i$ can capture the ``externality" cost it imposes to the system by having his cost function considered by the mechanism. Lastly, OCP-S may be infeasible outside of equilibrium, since an agent could report an infeasible operational range. This issue can be overcome by assuming that the principal may apply some feasible control input if OCP-S ends up being infeasible. More importantly, in order for the agents to behave according to the equilibrium strategies, they need to know the optimal solution $(x^{*},u^{*})$ for OCP-T. This means that the agents need to ``learn'' the equilibrium by replaying the game and refining their messages. In the next section, we will provide one such simple learning process and, instead of theoretically proving its convergence to the Nash equilibrium defined in Theorem 1, we will present a test case on HVAC control in an MPC setting, where the game is replayed consecutively, but at each time, the initial condition $\bar{x}_{0}$ is different. This showcases the potential use of our mechanism when a learning protocol is used within the MPC framework.


\section{HVAC Control Case Study}

\label{sec5}
\begin{figure*}\centering
 \includegraphics[trim=0.5in 0 0.5in 0,clip]{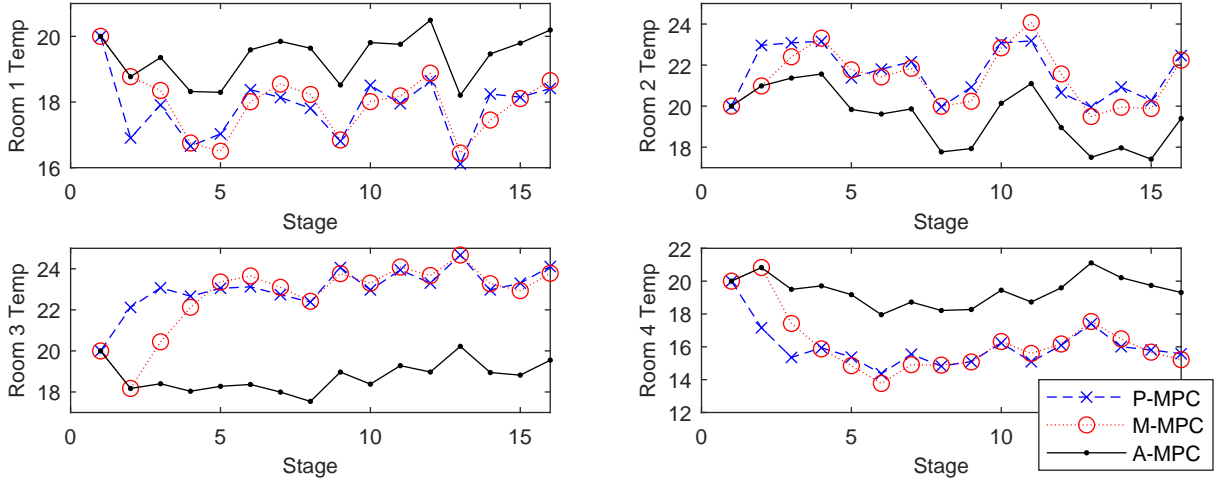}
\caption{Closed-Loop State Trajectories for (blue x-marked dashed line) P-MPC: Perfect Information Case; (red circle-marked dotted line) M-MPC: Surrogate-Mechanism Case; and (black dot-marked solid line) A-MPC: Consensus-Average Case}
\label{fig:state}
\end{figure*}

\begin{figure} \centering
 \scalebox{.35}{\includegraphics{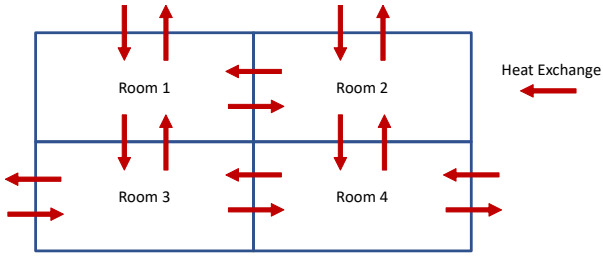}}
\caption{Room Configuration with Heat Exchange Vectors highlighted}
\label{fig:room}
\end{figure}

Consider a building manager who controls the HVAC system for four rooms. Each room occupant is an agent. Let $T_k = [T^{1}_k,T^{2}_k,T^{3}_k,T^{4}_k]^{\top}$ the state be the room temperatures. The building manager can heat/cool each individual room: Let $u_k = [u^1_k,u^2_k,u^3_k,u^4_k]^\top$ be the inputs in each room. Fig. \ref{fig:room} shows the layout of the rooms with respect to each other. Using standard HVAC models \cite{aswani2012identifying}, the dynamics are
\begin{equation}
\label{eqn:tdyn}
T_{k+1} = 
\begin{bmatrix}
\rho_1 & -\beta & - \gamma & 0 \\
-\beta & \rho_2 & 0 & \eta \\
-\gamma & 0 & \rho_3 & -\nu \\
0 & -\eta & -\nu & \rho_4
\end{bmatrix}T_k + \mu u_k- \alpha\begin{bmatrix}
T^{out}_{k} \\ T^{out}_{k} \\ T^{out}_{k} \\ T^{out}_{k}
\end{bmatrix} 
\end{equation} 
where $\rho_1 = 1 + \alpha + \beta + \gamma$; $\rho_2 =  1 + \alpha +\beta + \eta$; $\rho_3 = 1 + \alpha + \gamma + \nu$; $\rho_4 = 1 + \alpha + \eta + \nu$; and $\beta,\gamma,\eta,\nu$ are the heat transmission coefficients between rooms; and $\alpha$ is the heat coefficient with the outside. In addition, $\mu$ is the heat coefficient between the HVAC and each room. Note we treat the outside temperature as an exogenous disturbance vector. 

Now suppose each agent has the private cost function 
\begin{equation}
\textstyle\textbf{V}_{i}(x^{(i)},u^{(i)}) = \frac{\lambda_i}{2}\sum_{k=0}^{N-1} (T^{i}_{k} - T^{i}_{d})^2 + \frac{(1-\lambda_i)}{2}e^{(\gamma_iu^{i}_{k})^2}
\end{equation} 
where the tuple $(T^{i}_d,\lambda_i,\gamma_i)$ is the agent's private information, namely: their desired room temperate and two scalars regulating the trade-off between comfort and energy usage. Following the setup of our mechanism, the building manager does not know the agent's private information nor the shape of their objective functions. The manager broadcasts the function $f(T^i,u^i;v,w)  = \frac{1}{2}(e-vT_{r})^{2}+ \frac{1}{2}(wu)^{2}$, where $T_{r}$ is a reference temperature for the building manager. 

We consider an MPC setting, where the principal's receding horizon OCP-S at stage $t$ is given by
\begin{align} \label{P-MPC}
\min\ &\textstyle\frac{1}{2}\sum_{k =0}^{N-1}\sum_{i =1}^{I} (T^{i}_{t+k|t} - v^{(i)} _{t+k|t}T_{r})^2+ (w^{(i)} _{t+k|t}u^i_{t+k|t})^2\nonumber\\
\mathrm{s.t.}\ & T_{t+k+1|t} = AT_{t+k|t} + Bu_{t+k|t} + b_{t+k|t} \text{, } \forall k \in [T-1] \nonumber\\
&  (T^{(i)}_{t+k|t}u^{(i)}_{t+k|t}) \in \tilde{J}^{(i)}_{t}, \forall k \in [N-1]\\
& u_{min} \leq u^{(i)}_{t+k|t} \leq u_{max} ,\forall i \in [I], k \in [T-1]\nonumber\\
& T^{(i)}_{t|t} = T_t \text{ , } \forall i \in [I] \nonumber
\end{align}
where $A,B$ are given in (\ref{eqn:tdyn}) and $b_{t+k|t}$ is a prediction of $-\alpha T^{out}_{t+k}$ made at time $t$. Also, we use $u^{(i)}_{t+k|t}$ to denote the open-loop control input computed at stage $t$. Let $(T^{*}_{t},u^{*}_{t})$ be the optimal solution of (\ref{P-MPC}). The manager uses the current open-loop trajectories sent by agents to compute references
\begin{equation}
\textstyle c^{(i)}_{t+k|t} = \sum_{i \in\mathcal{N}_{i}} A_{ij}\tilde{T}^{(j)}_{t+k|t} \text{, } \forall k \in [T-1]
\end{equation} 
where the neighborhoods $\mathcal{N}_{i}$ match the room configurations. The principal also uses $T^{*(i)}_{t}$ in order to compute the reference trajectory in the fee $p_i$. After receiving such references, each agent solves their own OCP-A with the computed fees $p_i$ in the objective, obtaining a private solution vector $(\hat{T}^{(i)}_{t},\hat{u}^{(i)}_{t})$ and setting $\tilde{\lambda}^{(i)}_{t}$ to be the lagrange multipliers associated with the dynamics. Then each agent updates the remaining according to (\ref{N_mesg}), which in our case reduces to
\begin{equation}
\begin{aligned}
& v^{i}_{t+k|t} = ((1-\lambda_i) \hat{T}^{i}_{t+k|t} + T^{i}_{d})/T_{r} \\
& w^{i}_{t+k|t} = \sqrt{(1 - \lambda_i)\gamma^2_{i}e^{(\gamma_i\hat{u}_{t+k|t})^2}}
\end{aligned}
\end{equation}
Lastly, $\tilde{J}^{(i)}_{t} = (\{-\infty,+\infty\}_{k=0}^T, \{-\infty,+\infty\}_{k=0}^T)$ and $\tilde{T}^{i}_{t+1} = \hat{T}^{(i)}_{t}$. When $t=0$, all weights are initialized to unit values. All optimization problems were solved using the optimization solver MOSEK \cite{mosek2015mosek}. We consider a optimal control length $T=5$ and an MPC horizon of $N=15$. 
We compare our mechanism-based MPC (M-MPC) with the perfect-information case (P-MPC), where the principal knows the exact form of each $\textbf{V}_{i}$. We also consider a ``consensus''-type case, where no weights are updated and $T_{r}$ is set to the average of the desired temperatures (A-MPC). Fig. \ref{fig:state} shows the closed-loop state trajectory of the three approaches. It shows that our M-MPC closely tracks the P-MPC trajectory. Note that disturbance from the outside temperature causes the room temperatures to fluctuate around the desired values. 

Fig. \ref{fig:cost} shows M-MPC recovers the P-MPC cost after a few time steps. Since we used true costs to compute P-MPC, this shows our mechanism recovers the efficient trajectory. In contrast, the case without information exchange behaves poorly. This example shows our mechanism can be used with MPC: at each stage an optimal control problem is solved, the first-stage control is applied, and agents update their messages based on knowledge received from the principal. 
 
 \begin{figure} \centering
  \includegraphics{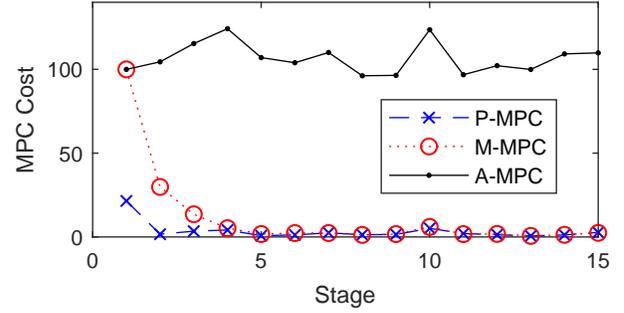}
 \caption{MPC Aggregated Stage Cost with Agents' True Utility Functions}
 \label{fig:cost}
 \end{figure}
\section{Conclusion}

We studied a dynamical system with several non-cooperative strategic agents. We proposed a mechanism where the agents interact via a platform and characterized the equilibrium strategies. We provided an HVAC control test case to highlight the need of designing mechanisms that have low-communication requirements in an MPC setting.

\bibliographystyle{IEEEtran}
\bibliography{IEEEabrv,MechDes}

\end{document}